\numberwithin{equation}{section}
\numberwithin{figure}{section}
\theoremstyle{plain}
\newtheorem{thm}{\protect\theoremname}[section]
  \theoremstyle{definition}
  \newtheorem{defn}[thm]{\protect\definitionname}
  \theoremstyle{plain}
  \newtheorem{lem}[thm]{\protect\lemmaname}
  \theoremstyle{remark}
  \newtheorem*{rem*}{\protect\remarkname}
  \theoremstyle{plain}
  \newtheorem{conjecture}[thm]{\protect\conjecturename}
  \theoremstyle{plain}
  \newtheorem{cor}[thm]{\protect\corollaryname}
  \theoremstyle{definition}
  \newtheorem{example}[thm]{\protect\examplename}
  \providecommand{\conjecturename}{Conjecture}
  \providecommand{\corollaryname}{Corollary}
  \providecommand{\definitionname}{Definition}
  \providecommand{\examplename}{Example}
  \providecommand{\lemmaname}{Lemma}
  \providecommand{\remarkname}{Remark}
\providecommand{\theoremname}{Theorem}
\begin{document}

\title{Estimating norms of commutators}

\author{Terry A. Loring}

\author{Fredy Vides}

\address{Department of Mathematics and Statistics, University of New Mexico,
Albuquerque, NM 87131, USA.}

\begin{abstract}
We discuss a general method of finding bounds on the norm of a commutator
of an operator and a function of a normal operator. As an application
we find new bounds on the norm of a commuator with a square root.
\end{abstract}

\maketitle

\section{Norms of Commutators and functional calculus}

{\tt
For later versions of this paper please visit \\
https://repository.unm.edu/handle/1928/23462 \\
UNM Lobo Vault 1928/23462
}

For $f$ a continuous function $\mathbb{R}$ that is periodic, period
$2\pi$ always assumed, then we we will have need to apply if via
functional calculus to both hermitian and unitary elements, in the
latter case by interpreting $f$ as a function on the circle. Just
to be clear, we introduce the notation
\[
f[V]=\tilde{f}(V)
\]
for $V$ any unitary element in a unital $C^{*}$-algebra $\mathcal{A}$,
where
\[
f(z)=f\left(-i\log(z)\right)
\]
 for any $z$ of modulus one. For example
\[
\cos[V]=\tfrac{1}{2}V^{*}+\tfrac{1}{2}V.
\]

It is trivial to prove that when an element $A$ in $\mathcal{A}$
commutes with $V$ then $A$ commutes with $f[V]$. We will need good
estimates that quantify the statement that when $A$ almost commutes
with $V$ then it also almost commutes with $f[V]$. 

The only norm on $\left[A,V\right]$ we really care about is the operator
norm, i.e. the norm on $\mathcal{A}$, that we denote
$\left\Vert \cdot \right\Vert $.  As to functions $f$ that are periodic,
we need
\[
\left\Vert f\right\Vert _{\infty}=\sup_{-\pi\leq x\leq\pi}\left|f(x)\right|
\]
and, whenever $f$ has Fourier series converging absolutely, we use
\[
\bigl\Vert f\bigr\Vert _{F} = \bigl \Vert \hat{f} \bigr \Vert _{1}
\]
the $\ell^{1}$ norm of the Fourier series. We use $\mathcal{U}(A)$
for the group of unitaries in $\mathcal{A}$. 

\begin{defn}
Suppose $f$ is continous and periodic. Define
$\eta_{f}:[0,\infty)\rightarrow[0,\infty)$ by 
\[
f_{\eta}(\delta)
=
\sup\left\{
\left\Vert \left[f[V],A\right]\right\Vert 
\left|\strut\, 
V\in\mathcal{U}(A),\left\Vert A\right\Vert \leq1,\left\Vert \left[V,A\right]\right\Vert \leq\delta
\right.\right\} 
\]
and the supremum is taken over every possible $C^{*}$-algebra $\mathcal{A}$
and taking $V$ and $A$ in $\mathcal{A}$.
\end{defn}

There is a general trend where results about commutators are related to
continuity results involving the functional calculus.  See 
\cite{BhatiaKittanehInequalitiesNormsCommutators}, for example.  In the
case of unitaries there is an easy connection between the two topics.

\begin{lem}
For $f$ that is continous and periodic, if $V$ and $V_{1}$ are unitaries
then
\[
\left\Vert f[V]-f[V_{1}]\right\Vert
\leq
\eta_{f}\left(\left\Vert V-V_{1}\right\Vert \right).
\]
\end{lem}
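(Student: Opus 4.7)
The plan is to reduce the continuity statement for $f[\cdot]$ to a commutator estimate by performing the standard $2 \times 2$ dilation trick, and then invoke the definition of $\eta_{f}$ applied inside $M_{2}(\mathcal{A})$.

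First, I would work in the $C^{*}$-algebra $M_{2}(\mathcal{A})$ and form the block diagonal unitary
\[
W = \begin{pmatrix} V & 0 \\ 0 & V_{1}\end{pmatrix},
\]
together with the contraction
\[
A = \begin{pmatrix} 0 & 1 \\ 0 & 0\end{pmatrix},
\]
so that $\|A\| = 1$. A direct multiplication gives
\[
[W,A] = \begin{pmatrix} 0 & V - V_{1} \\ 0 & 0\end{pmatrix},
\]
whose norm equals $\|V - V_{1}\|$.

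Next, I would invoke the fact that continuous functional calculus respects direct sums: since $W$ is unitary with spectrum contained in $\sigma(V) \cup \sigma(V_{1})$, one has $f[W] = f[V] \oplus f[V_{1}]$. The same off-diagonal computation then yields
\[
[f[W], A] = \begin{pmatrix} 0 & f[V] - f[V_{1}] \\ 0 & 0\end{pmatrix},
\]
with norm exactly $\|f[V] - f[V_{1}]\|$.

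Finally, applying the definition of $\eta_{f}$ in the $C^{*}$-algebra $M_{2}(\mathcal{A})$ to the pair $(W,A)$, with $\delta = \|V - V_{1}\|$, gives
\[
\|f[V] - f[V_{1}]\| = \|[f[W], A]\| \leq \eta_{f}(\|V - V_{1}\|),
\]
as required. The only genuinely delicate point is the implicit use that the supremum defining $\eta_{f}$ ranges over \emph{all} ambient $C^{*}$-algebras, which is exactly how the definition is phrased, so passing from $\mathcal{A}$ to $M_{2}(\mathcal{A})$ is legal. Everything else is essentially a matrix computation.
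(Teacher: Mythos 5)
Your proof is correct and uses essentially the same $2\times 2$ dilation trick as the paper. The only cosmetic difference is the choice of $A$: you take the nilpotent matrix unit $e_{12}$, while the paper uses the self-adjoint flip $e_{12}+e_{21}$; both are contractions producing the same commutator norms, and the key points you flag --- that $\eta_f$ is a supremum over all $C^*$-algebras so passing to $M_2(\mathcal{A})$ is legitimate, and that functional calculus respects direct sums --- are exactly the content of the paper's ``easy calculation.''
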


\begin{proof}
Notice
\[
\left\Vert \left[\left(\begin{array}{cc}
0 & 1\\
1 & 0
\end{array}\right),\left(\begin{array}{cc}
0 & V\\
V_{1} & 0
\end{array}\right)\right]\right\Vert =\left\Vert V-V_{1}\right\Vert 
\]
and
\[
\left\Vert \left[\left(\begin{array}{cc}
0 & 1\\
1 & 0
\end{array}\right),\left(\begin{array}{cc}
0 & f[V]\\
f[V_{1}] & 0
\end{array}\right)\right]\right\Vert =\left\Vert f[V]-f[V_{1}]\right\Vert 
\]
so this is an easy calculation.
\end{proof}

The following generalizes a trick in Pedersen's work on commutators
and square roots, \cite[Lemma 6.2]{PedersenCoronaConstruction}.

\begin{lem}
\label{lem:bound_f_on_circle} Suppose $f$, $g$ and $h$ are continous
and periodic, that $g^{\prime}$ has absolutely convergent Fourier
series. If $f=g+h$ then
\[
\eta_{f}(\delta)\leq m\delta+b
\]
where
\[
m=\left\Vert g^{\prime}\right\Vert _{F}
\]
 and
\[
b=2\min_{\lambda\in\mathbb{C}}\left\Vert h-\lambda\right\Vert _{\infty}.
\]
When $h$ is real valued then $b=\max(h)-\min(h)$.
\end{lem}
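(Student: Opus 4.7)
The plan is to split $[f[V],A] = [g[V],A] + [h[V],A]$ and bound each summand separately, with the slope term $m\delta$ coming from $g$ and the constant term $b$ coming from $h$.

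For the $g$ piece I would work with the Fourier expansion $g(x) = \sum_{n\in\mathbb{Z}} \hat g(n) e^{inx}$, so that (since $g'$ has absolutely convergent Fourier series, and hence so does $g$) $g[V] = \sum_n \hat g(n) V^n$ converges in norm. Then $[g[V],A] = \sum_n \hat g(n)[V^n,A]$. The key auxiliary estimate is the telescoping identity
\[
[V^n,A] = \sum_{k=0}^{n-1} V^{k}[V,A]V^{n-1-k} \qquad (n\geq 1),
\]
with the analogous formula for $n\leq -1$ obtained by noting that $V^{-1}$ is also unitary and $[V^{-1},A] = -V^{-1}[V,A]V^{-1}$, giving $\|[V^n,A]\|\leq |n|\,\|[V,A]\|$ in all cases. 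Combined with $\widehat{g'}(n) = in\,\hat g(n)$ this yields
\[
\bigl\|[g[V],A]\bigr\| \leq \sum_{n}|n|\,|\hat g(n)|\,\|[V,A]\| = \|g'\|_{F}\,\|[V,A]\| \leq m\delta.
\]

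For the $h$ piece the trick is that constants are central: for any $\lambda\in\mathbb{C}$, $[h[V],A] = [h[V]-\lambda\,1,A] = [(h-\lambda)[V],A]$. Then the crude bound $\|[X,A]\|\leq 2\|X\|\|A\|$, together with $\|A\|\leq 1$ and the functional-calculus estimate $\|(h-\lambda)[V]\|\leq \|h-\lambda\|_{\infty}$, gives
\[
\bigl\|[h[V],A]\bigr\| \leq 2\,\|h-\lambda\|_{\infty},
\]
and taking the infimum over $\lambda$ produces $b$. For the final sentence, when $h$ is real valued one observes that $|h(x)-(a+ib)|^{2}\geq (h(x)-a)^{2}$, so the infimum over complex $\lambda$ is attained on the real axis, and the optimal real $\lambda$ is the midpoint $(\max h+\min h)/2$, which gives $b = \max(h)-\min(h)$.

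Adding the two bounds and taking the supremum over admissible $(V,A)$ gives $\eta_f(\delta)\leq m\delta + b$. The one point requiring a little care is the norm convergence of the series for $g[V]$ and the justification of pulling the commutator through the sum; this is immediate once one notes $\sum |\hat g(n)|\leq |\hat g(0)|+\sum_{n\ne 0}|n||\hat g(n)|<\infty$, so there is no real obstacle — the argument is essentially bookkeeping around the Fourier estimate for $[V^n,A]$.
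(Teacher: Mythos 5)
Your proof is correct and follows essentially the same route as the paper: split $[f[V],A]$ into the $g$ and $h$ pieces, bound the $g$ term via the Fourier expansion and $\|[V^n,A]\|\leq|n|\,\|[V,A]\|$, and bound the $h$ term by centering with a scalar and using $\|[X,A]\|\leq 2\|X\|\|A\|$. The only difference is that you spell out the telescoping identity for $[V^n,A]$ and the midpoint optimization for real $h$, both of which the paper leaves implicit.
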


\begin{rem*}
In the special case where $h=0$ we recover the folk theorem that
says
\begin{equation}
\left\Vert \left[g[V],A\right]\right\Vert \leq\left\Vert g^{\prime}\right\Vert _{F}\left\Vert \left[V,A\right]\right\Vert \label{eq:commutator_folk_bound}
\end{equation}
for any unitary $V$ and any operator $A$, now without norm restriction
because the two sides are homogeneous in $A$.
\end{rem*}

\begin{proof}
Suppose $\left\Vert A\right\Vert \leq1$ and $V$ is unitary. Since
\[
\left\Vert \left[f[V],A\right]\right\Vert \leq\left\Vert \left[g[V],A\right]\right\Vert +\left\Vert \left[h[V],A\right]\right\Vert 
\]
and
\[
\left\Vert \left[h[V],A\right]\right\Vert =\left\Vert \left[h[V]+\lambda I,A\right]\right\Vert =\left\Vert \left[\left(h+\lambda\right)[V],A\right]\right\Vert 
\]
it suffices to prove equation (\ref{eq:commutator_folk_bound}) and
\begin{equation}
\left\Vert \left[h[V],A\right]\right\Vert 
\leq
2\left\Vert h\right\Vert _{\infty}.
\label{eq:bound_on_h_commutator}
\end{equation}

We know 
\[
g(x)=\sum_{n=-\infty}^{\infty}a_{n}e^{inx}
\]
 where $\sum|na_{n}|<\infty$ and so 
\begin{align*}
\left\Vert \left[g[V],A\right]\right\Vert  & =\left\Vert \left[\sum_{n=-\infty}^{\infty}a_{n}V^{n},A\right]\right\Vert \\
 & \leq\sum_{n=-\infty}^{\infty}\left|a_{n}\right|\left\Vert \left[V^{n},A\right]\right\Vert \\
 & \leq\sum_{n=-\infty}^{\infty}\left|na_{n}\right|\left\Vert \left[V,A\right]\right\Vert \\
 & =\left\Vert g^{\prime}\right\Vert _{F}\left\Vert \left[V,A\right]\right\Vert .
\end{align*}
The spectral theorem tells us $\left\Vert h[V]\right\Vert \leq\left\Vert h\right\Vert _{\infty}$
and so
\[
\left\Vert \left[h[V],A\right]\right\Vert =\left\Vert h[V]A-h[V]A\right\Vert \leq2\left\Vert h[V]\right\Vert \left\Vert A\right\Vert \leq2\left\Vert h\right\Vert _{\infty}.
\]
\end{proof}

As an example, we attack the square root function $f(x)=\sqrt{x}$.
However, this is for $0\leq H\leq1$ replacing $V$ so is about $\gamma_{f}$
not $\eta_{f}$, where $\gamma_{f}$ we now define for working with
functional calculus of postive contractions.

\begin{defn}
Suppose $f$ is continous on $[0,1]$. Define $\eta_{f}:[0,\infty)\rightarrow[0,\infty)$
by 
\[
f_{\eta}(\delta)
=
\sup\left\{ \left\Vert \left[f(H),A\right]\right\Vert \left|\strut\,0\leq H\leq1,\left\Vert A\right\Vert \leq1,\left\Vert H\right\Vert \leq1,\left\Vert \left[H,A\right]\right\Vert \leq\delta\right.\right\} 
\]
and the supremum is taken over every possible $C^{*}$-algebra $\mathcal{A}$
and taking $H$ and $A$ in $\mathcal{A}$. 
\end{defn}

\begin{lem}
\label{lem:bound_f_on_positive} 
Suppose $f$, $g$ and $h$ are continous
on $[0,1]$ and that $g$ is analytic, with power series
\[
g(x)=\sum_{n=0}^{\infty}a_{n}x^n.
\]
If $f=g+h$ then
\[
\eta_{f}(\delta)\leq m\delta+b
\]
where
\[
m=\sum_{n=0}^{\infty}\left|na_{n}\right|
\]
and
\[
b=2\min_{\lambda\in\mathbb{C}}\left\Vert h-\lambda\right\Vert _{\infty}.
\]
\end{lem}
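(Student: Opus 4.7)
The plan is to mimic the proof of Lemma \ref{lem:bound_f_on_circle} almost verbatim, replacing the use of $V^n$ (unitary, with $\|V^n\|=1$) by $H^n$ (positive contraction, with $\|H^n\| \le 1$). As before, fix $H$ with $0 \le H \le 1$ and $A$ with $\|A\| \le 1$ such that $\|[H,A]\| \le \delta$. By the triangle inequality,
\[
\left\Vert [f(H),A]\right\Vert
\le \left\Vert [g(H),A]\right\Vert + \left\Vert [h(H),A]\right\Vert,
\]
and the second term is invariant under the substitution $h \mapsto h-\lambda$ for any scalar $\lambda$, so it suffices to bound $\|[g(H),A]\|$ by $m\delta$ and $\|[h(H),A]\|$ by $2\|h\|_\infty$ and then take the infimum over $\lambda$.

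The bound on the $h$-term is immediate from the spectral theorem, exactly as in Lemma \ref{lem:bound_f_on_circle}: $\|h(H)\| \le \|h\|_\infty$ on $[0,1]$ gives $\|[h(H),A]\| \le 2\|h(H)\|\|A\| \le 2\|h\|_\infty$. The interesting step is the $g$-term, where we use the power series to write
\[
[g(H),A] = \sum_{n=0}^{\infty} a_n [H^n, A]
\]
(the $n=0$ term vanishes), and I would apply the standard telescoping identity
\[
[H^n, A] = \sum_{k=0}^{n-1} H^k [H,A] H^{n-1-k},
\]
which together with $\|H\| \le 1$ yields $\|[H^n,A]\| \le n \|[H,A]\|$. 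Summing gives $\|[g(H),A]\| \le \sum |na_n| \cdot \|[H,A]\| = m\delta$.

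The only mild obstacle is justifying the operator-norm convergence of the series $\sum a_n H^n$ and $\sum a_n [H^n,A]$ before manipulating them term by term. Since $\|H\| \le 1$, analyticity of $g$ on a neighborhood of $[0,1]$ (or even just $\sum |a_n| < \infty$, which is implied when $m = \sum |na_n| < \infty$ unless $m = \infty$, in which case the bound is trivial) makes both series norm-convergent, so the termwise estimates above are legitimate. Combining the two bounds and taking the infimum over $\lambda$ on the $h$-side produces $\|[f(H),A]\| \le m\delta + b$, and taking the supremum over admissible $(H,A)$ gives the claimed bound on $\eta_f(\delta)$.
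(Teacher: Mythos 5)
Your proof is correct and takes essentially the same approach as the paper. The paper's written proof only displays the chain of inequalities bounding the $g$-term, leaving the $f=g+h$ split, the $h$-bound via $\Vert [h(H),A]\Vert \le 2\Vert h\Vert_\infty$, and the estimate $\Vert [H^n,A]\Vert \le n\Vert [H,A]\Vert$ implicit (these being identical to the corresponding steps in Lemma~\ref{lem:bound_f_on_circle}); you spelled them out, including the telescoping identity and the convergence justification, which is a faithful fleshing-out rather than a different route.
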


\begin{proof}
We know $\sum|na_{n}|<\infty$ and so 
\begin{align*}
\left\Vert \left[g(H),A\right]\right\Vert  & =\left\Vert \left[\sum_{n=0}^{\infty}a_{n}H^{n},A\right]\right\Vert \\
 & =\left\Vert \sum_{n=0}^{\infty}a_{n}\left[H^{n},A\right]\right\Vert \\
 & \leq\sum_{n=0}^{\infty}\left|a_{n}\right|\left\Vert \left[H^{n},A\right]\right\Vert \\
 & \leq\sum_{n=0}^{\infty}\left|na_{n}\right|\left\Vert \left[H,A\right]\right\Vert .
\end{align*}

It is not clear who first asseted the following, but it appears in
\cite{PedersenCommutatorInequality}.
\end{proof}

\begin{conjecture}
For $f(x)=\sqrt{x}$ we have $\gamma_{f}(\delta)=\sqrt{\delta}$.
Equivalently
\[
\left\Vert \left[H^{\frac{1}{2}},A\right]\right\Vert 
\leq
\left\Vert \left[H,A\right]\right\Vert ^{\frac{1}{2}}
\]
wherever $0\leq H\leq1$ and $\left\Vert A\right\Vert \leq1$.
\end{conjecture}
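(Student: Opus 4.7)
The plan has two layers: first get the correct order of magnitude $O(\sqrt{\delta})$, and then fight for the sharp constant $1$. The extremal family $H=\mathrm{diag}(\eta,0)$, $A=\bigl(\begin{smallmatrix}0&1\\1&0\end{smallmatrix}\bigr)$ already gives equality $\|[\sqrt{H},A]\|=\sqrt{\|[H,A]\|}=\sqrt{\eta}$, so any approach has to be tight on this essentially rank-one example.

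For the order of magnitude I would mimic Lemma~\ref{lem:bound_f_on_positive} with an $\epsilon$-regularization. The naive split $g_\epsilon(x)=\sqrt{x+\epsilon}-\sqrt{\epsilon}$ fails because its Taylor series at $0$ does not converge on $[0,1]$ when $\epsilon<1$, so instead I would work from the integral representation
\[
\sqrt{x}=\frac{1}{\pi}\int_0^\infty\frac{x}{x+t}\cdot\frac{dt}{\sqrt{t}},
\]
split at the cutoff $t=\epsilon$, and set
\[
g_\epsilon(x)=\frac{1}{\pi}\int_\epsilon^\infty\frac{x}{x+t}\cdot\frac{dt}{\sqrt{t}},\qquad h_\epsilon(x)=\sqrt{x}-g_\epsilon(x).
\]
The resolvent identity $[H(H+t)^{-1},A]=t(H+t)^{-1}[H,A](H+t)^{-1}$ and $\|(H+t)^{-1}\|\leq 1/t$ yield
\[
\|[g_\epsilon(H),A]\|\leq\frac{\delta}{\pi}\int_\epsilon^\infty\frac{dt}{t^{3/2}}=\frac{2\delta}{\pi\sqrt{\epsilon}},\qquad \|h_\epsilon\|_\infty\leq\frac{2\sqrt{\epsilon}}{\pi},
\]
and optimizing at $\epsilon=\delta$ gives $\gamma_f(\delta)\leq\tfrac{4}{\pi}\sqrt{\delta}$, which is the right order but not the sharp constant.

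To aim at constant $1$, my lever would be the classical operator H\"older inequality
\[
\|\sqrt{X}-\sqrt{Y}\|\leq\sqrt{\|X-Y\|}\qquad(X,Y\geq 0),
\]
valid in the operator norm and following from the operator monotonicity of $\sqrt{\cdot}$. For unitary $U$ we have the identity $\sqrt{UHU^{*}}=U\sqrt{H}U^{*}$, whence
\[
\|[\sqrt{H},U]\|=\|\sqrt{H}-U\sqrt{H}U^{*}\|=\|\sqrt{H}-\sqrt{UHU^{*}}\|\leq\sqrt{\|H-UHU^{*}\|}=\sqrt{\|[H,U]\|},
\]
i.e.\ the sharp bound for unitary $A$.

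The real obstacle is extending from unitary $A$ to an arbitrary contraction $\|A\|\leq 1$ without losing sharpness. Writing $A=\tfrac12(U+V)$ with $U,V$ unitary yields only $\|[\sqrt{H},A]\|\leq\tfrac12(\sqrt{\|[H,U]\|}+\sqrt{\|[H,V]\|})$, and there is no reason to expect $\|[H,U]\|+\|[H,V]\|\leq 2\|[H,A]\|$; similarly, replacing a self-adjoint $A$ by the unitary $A+i\sqrt{I-A^{2}}$ produces an extra commutator with $\sqrt{I-A^{2}}$ that is itself only controlled by $O(\sqrt{\|[H,A]\|})$, which degrades the exponent. My best candidate would be a Halmos/Julia-style unitary dilation: embed $(H,A)$ into $(\tilde H,\tilde U)$ with $\tilde U$ unitary, arrange $\|[\tilde H,\tilde U]\|\leq\|[H,A]\|$ by a careful choice of $\tilde H$, and recover $[\sqrt{H},A]$ as a compression of $[\sqrt{\tilde H},\tilde U]$. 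Whether such a dilation exists in the required isometric form appears to be the essential difficulty of the conjecture.
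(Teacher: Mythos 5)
The statement you are addressing is presented in the paper as a \emph{Conjecture}, not a theorem; the paper offers no proof of it and attributes its first appearance to Pedersen's note \cite{PedersenCommutatorInequality}. You recognize this correctly, and your text is an honest assessment of what can and cannot currently be proved rather than a claimed proof. So there is no paper proof to compare against. What the paper does prove is strictly weaker: the special case $\delta\geq\tfrac14$ in Lemma~\ref{lem:large_delta_root} (via tangent-line approximations~(\ref{eq:bounds_from_linear_approx}) to $\sqrt{x}$), plus, by quoting Pedersen's Taylor-polynomial argument for $1-\sqrt{1-x}$, the global bound $\gamma_f(\delta)\leq\frac{2}{\sqrt{\pi}}\sqrt{\delta}$ and the piecewise-linear envelope of Theorem~\ref{thm:newRootBound}.

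Your partial results are essentially correct and take a different route. The integral representation $\sqrt{x}=\frac{1}{\pi}\int_0^\infty\frac{x}{x+t}\,\frac{dt}{\sqrt t}$ together with the resolvent identity does give $\gamma_f(\delta)\leq\frac{4}{\pi}\sqrt{\delta}$, but only if you pass from $2\|h_\epsilon\|_\infty$ to the centered bound $2\min_\lambda\|h_\epsilon-\lambda\|_\infty\leq\frac{2\sqrt{\epsilon}}{\pi}$ before setting $\epsilon=\delta$; as written, $2\|h_\epsilon\|_\infty$ gives $\frac{6}{\pi}\sqrt{\delta}$ at $\epsilon=\delta$. Either way $\frac{4}{\pi}\approx1.27$ is worse than Pedersen's $\frac{2}{\sqrt{\pi}}\approx1.13$, so this buys a cleaner derivation but a weaker constant. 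The genuinely new observation is the unitary case: $\|[\sqrt H,U]\|=\|\sqrt H-\sqrt{UHU^*}\|\leq\sqrt{\|H-UHU^*\|}=\sqrt{\|[H,U]\|}$ using operator monotonicity of the square root, which proves the conjectured sharp bound whenever $A$ is unitary and is not in the paper. Your diagnosis of the obstruction is also accurate: the standard ways of passing from unitaries to contractions (averaging, polar-type dilations) lose either the constant or the exponent, and that gap is exactly why the statement remains a conjecture.
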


For any $a$ greater than $0$ and at most $1$ let $g$ be the Taylor
expansion of $f$ at $a$,
\[
g(x)=\frac{1}{2\sqrt{a}}(x-a)+\sqrt{a}.
\]
 and $h=f-g$,
\[
h(x)=\sqrt{x}-\frac{1}{2\sqrt{a}}(x-a)-\sqrt{a}.
\]
 Clearly $\max(h)=h(a)=0$ and the minimum occurs at either $x=0$
or $x=1$, where the values are
\[
h(0)=-\tfrac{1}{2}\sqrt{a}
\]
 and
\[
h(1)=1-\frac{1}{2\sqrt{a}}-\frac{\sqrt{a}}{2}.
\]
For $\tfrac{1}{4}\leq a\leq1$ we find
\[
\min(h)=-\tfrac{1}{2}\sqrt{a}.
\]
Therefore,
\begin{equation}
\eta_{f}(\delta)\leq\frac{1}{2\sqrt{a}}\delta+\tfrac{1}{2}\sqrt{a}
\label{eq:bounds_from_linear_approx}
\end{equation}
for $\tfrac{1}{4}\leq a\leq1$, $ $which is very interesting since
at $\delta=a$ the right hand side is $\sqrt{a}$.  We have
proven a special case of the conjecture, which we state as a lemma.

\begin{lem}
\label{lem:large_delta_root}
When $0\leq H\leq1$ and $\left\Vert A\right\Vert \leq 1$
and $\left\Vert \left[H,A\right]\right\Vert \geq\tfrac{1}{4}$,
we have
\[
\left\Vert \left[H^{\frac{1}{2}},A\right]\right\Vert \leq\left\Vert \left[H,A\right]\right\Vert ^{\frac{1}{2}}.
\]
\end{lem}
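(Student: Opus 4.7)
The plan is to optimize the parametrized bound in equation (\ref{eq:bounds_from_linear_approx}) over the free expansion point $a$. For $f(x)=\sqrt{x}$, Lemma \ref{lem:bound_f_on_positive} applied to the splitting $f=g+h$ at a point $a\in[\tfrac{1}{4},1]$ has already given
\[
\eta_{f}(\delta)\leq\frac{1}{2\sqrt{a}}\delta+\tfrac{1}{2}\sqrt{a},
\]
and since $a$ was arbitrary in this range, we are free to let $a$ depend on $\delta$ and take the infimum of the right-hand side.

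Substituting $u=\sqrt{a}$, the right-hand side is $\tfrac{\delta}{2u}+\tfrac{u}{2}$, and AM--GM (or a one-variable calculus check) bounds it below by $\sqrt{\delta}$, with equality at $u^{2}=\delta$, i.e.\ $a=\delta$. So if the value $a=\delta$ lies in the admissible range $[\tfrac{1}{4},1]$, substituting it into (\ref{eq:bounds_from_linear_approx}) gives
\[
\eta_{f}(\delta)\leq\frac{\delta}{2\sqrt{\delta}}+\tfrac{1}{2}\sqrt{\delta}=\sqrt{\delta},
\]
and applying $\eta_{f}$ to $\delta=\|[H,A]\|$ yields the stated inequality.

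It remains to check that $\delta=\|[H,A]\|$ actually lies in $[\tfrac{1}{4},1]$. The lower bound $\delta\geq\tfrac{1}{4}$ is the standing hypothesis of the lemma. For the upper bound, I would use the shift trick
\[
[H,A]=\bigl[H-\tfrac{1}{2}I,\,A\bigr],
\]
together with the observation that $0\leq H\leq I$ forces $\|H-\tfrac{1}{2}I\|\leq\tfrac{1}{2}$. With $\|A\|\leq 1$ this gives $\|[H,A]\|\leq 2\cdot\tfrac{1}{2}\cdot 1=1$, so $\delta\in[\tfrac{1}{4},1]$ and $a:=\delta$ is admissible. There is no real obstacle in this argument; the only subtle step is noticing that the hypotheses $0\leq H\leq I$ and $\|A\|\leq 1$ already force $\delta\leq 1$, which is exactly what is needed for the AM--GM optimum $a=\delta$ to fall in the range in which the linear-approximation bound (\ref{eq:bounds_from_linear_approx}) was derived.
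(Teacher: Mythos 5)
Your proof is correct and is essentially the paper's own argument: the paper derives the parametrized bound (\ref{eq:bounds_from_linear_approx}) and then observes that setting $a=\delta$ makes the right side equal $\sqrt{\delta}$, valid precisely when $\tfrac{1}{4}\leq\delta\leq1$. You merely make explicit two things the paper leaves tacit, namely that $a=\delta$ is the AM--GM optimum and that the standing hypotheses already force $\|[H,A]\|\leq1$ via the shift $H\mapsto H-\tfrac{1}{2}I$.
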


Pedersen uses the following easy lemma.

\begin{lem}
If $f_{1}$ is continuous on $[0,1]$ and we set 
\[
f_{2}(x)=1-f_{1}(1-x)
\]
that $\gamma_{f_{1}}=\gamma_{f_{2}}$. 
\end{lem}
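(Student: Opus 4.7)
The plan is to exploit the symmetry $H \leftrightarrow I - H$ on the set of positive contractions in $\mathcal{A}$, and show this substitution exchanges the two sides of the claimed equality.

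First I would fix a $C^*$-algebra $\mathcal{A}$, take any $H,A\in\mathcal{A}$ with $0\leq H\leq I$ and $\|A\|\leq 1$, and set $H'=I-H$. Then $H'$ is again a positive contraction, and
\[
[H',A]=[I-H,A]=-[H,A],
\]
so $\|[H',A]\|=\|[H,A]\|$. Thus the constraints appearing in the supremum defining $\gamma_{f_2}(\delta)$ on the pair $(H,A)$ are in bijection with the constraints on $(H',A)$ in the supremum defining $\gamma_{f_1}(\delta)$.

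Next I would use the functional calculus identity $f_2(H)=I-f_1(I-H)=I-f_1(H')$, which follows directly from the defining relation $f_2(x)=1-f_1(1-x)$. This gives
\[
[f_2(H),A]=-[f_1(H'),A],
\]
and hence $\|[f_2(H),A]\|=\|[f_1(H'),A]\|$. Taking suprema over all admissible $(H,A)$ with $\|[H,A]\|\leq\delta$ yields $\gamma_{f_2}(\delta)\leq\gamma_{f_1}(\delta)$.

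Finally I would note that the substitution $f_1\mapsto f_2$ is an involution: a direct check gives $1-f_2(1-x)=1-(1-f_1(x))=f_1(x)$, so interchanging the roles of $f_1$ and $f_2$ in the argument above yields the reverse inequality $\gamma_{f_1}(\delta)\leq\gamma_{f_2}(\delta)$. There is no real obstacle; the entire proof is a change of variables, and the only thing to be careful about is invoking functional calculus to justify $f_2(H)=I-f_1(I-H)$, which holds because both sides agree as continuous functions applied to the self-adjoint element $H$.
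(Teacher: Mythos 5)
Your proof is correct, and in fact the paper does not supply one: it labels this an ``easy lemma'' of Pedersen and moves on without a \texttt{proof} environment. The substitution $H\mapsto I-H$ together with the functional-calculus identity $f_2(H)=I-f_1(I-H)$ is exactly the intended argument, and your verification that $[I-H,A]=-[H,A]$ and $[f_2(H),A]=-[f_1(I-H),A]$ (so both relevant norms are preserved) is complete. One small economy worth noting: since $H\mapsto I-H$ is already a bijection of the set of positive contractions onto itself, the supremum defining $\gamma_{f_2}(\delta)$ is literally a reindexing of the supremum defining $\gamma_{f_1}(\delta)$, so you get equality in one step; the closing appeal to the involution $f_1\leftrightarrow f_2$ is correct but redundant.
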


His proof of the inequality 
\begin{equation}
\left\Vert \left[H^{\frac{1}{2}},A\right]\right\Vert 
\leq
\frac{2}{\sqrt{\pi}}\left\Vert \left[H,A\right]\right\Vert ^{\frac{1}{2}}\label{eq:gamma_bound_root}
\end{equation}
(notice $2\pi^{\frac{1}{2}}\approx1.128$) in
\cite[Lemma 6.2]{PedersenCoronaConstruction}
invokes Lemma~\ref{lem:bound_f_on_positive} infinitely many times,
as $g$ ranges over the Taylor polynomials for $f(x)=1-\sqrt{1-x}$
exanded at $0$. While (\ref{eq:gamma_bound_root}) is the statement
\[
\gamma_{f}(\delta)\leq\frac{2}{\sqrt{\pi}}\delta^{\frac{1}{2}}
\]
what he actually proves is a bound that is significantly smaller for
$\delta$ close to $1$. Indeed, he showed $\gamma_{f}$ to be bounded
by the function shown in Figure~\ref{fig:GertsBounds}

\begin{figure}
\includegraphics[clip,scale=0.6]{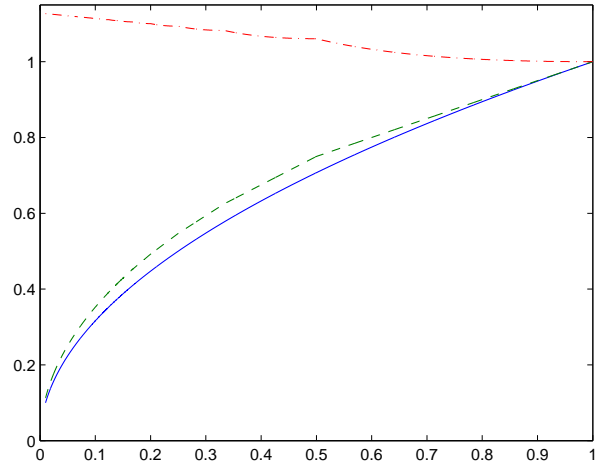}
\caption{
Bound on $\left\Vert \left[H^{\frac{1}{2}},A\right]\right\Vert $
for varying values of $\left\Vert \left[H,A\right]\right\Vert $ as
found by Pedersen, shown as a dashed line. The solid curve is $\sqrt{\delta}$.
The top curve is the ratio of the bound to $\sqrt{\delta}$. 
\label{fig:GertsBounds}
}
\end{figure}

The mininum of all these lines does not lead to an easy formula, so
we state our best theorem regarding the square root in terms
on a ploted function.

\begin{thm}
\label{thm:newRootBound}
If $0\leq H\leq1$ and $\left\Vert A\right\Vert \leq1$ then 
\[
\left\Vert \left[H^{\frac{1}{2}},A\right]\right\Vert 
\leq
\gamma_{0}\left(\left\Vert \left[H,A\right]\right\Vert \right)
\]
where $\gamma$ is the function illustrated in
Figure~\ref{fig:FredysBounds}. 
\end{thm}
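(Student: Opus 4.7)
The plan is to exhibit $\gamma_{0}$ as the pointwise infimum of a family of linear upper bounds on $\gamma_{f}(\delta)$ for $f(x)=\sqrt{x}$, each produced by applying Lemma~\ref{lem:bound_f_on_positive} (together with the symmetry lemma) to a different splitting $f=g+h$. Each individual bound is already valid, so the pointwise infimum of the whole family is automatically valid as well; the work is in selecting the families and plotting the resulting envelope. No new commutator inequality is needed.

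The first family is the one already produced around equation~\eqref{eq:bounds_from_linear_approx}. For each $a\in[\tfrac{1}{4},1]$, taking $g$ to be the linear Taylor expansion of $\sqrt{x}$ at $a$ and $h=f-g$ gives
\[
\gamma_{f}(\delta)\leq \tfrac{1}{2\sqrt{a}}\,\delta+\tfrac{1}{2}\sqrt{a}.
\]
The lower envelope of this one-parameter family of lines is tight against $\sqrt{\delta}$ for $\delta\in[\tfrac{1}{4},1]$, recovering Lemma~\ref{lem:large_delta_root}, and also supplies a useful bound for $\delta$ slightly below $\tfrac{1}{4}$ by extrapolating the line with smallest intercept.

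The second family reworks Pedersen's argument quantitatively. Writing $\tilde{f}(x)=1-\sqrt{1-x}$, the symmetry lemma gives $\gamma_{\tilde{f}}=\gamma_{f}$. Expanding $\tilde{f}(x)=\sum_{n\geq1}a_{n}x^{n}$ with $a_{n}>0$ and $a_{n}\sim(2\sqrt{\pi}\,n^{3/2})^{-1}$, and splitting $\tilde{f}=g_{N}+h_{N}$ with $g_{N}$ the degree-$N$ Taylor polynomial, Lemma~\ref{lem:bound_f_on_positive} supplies
\[
\gamma_{f}(\delta)\leq m_{N}\delta+b_{N},
\]
where $m_{N}=\sum_{n=1}^{N}n\,a_{n}$, and, since $h_{N}$ is monotone increasing on $[0,1]$ with $h_{N}(0)=0$, also $b_{N}=2\min_{\lambda}\|h_{N}-\lambda\|_{\infty}=\sum_{n>N}a_{n}$. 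The asymptotics $m_{N}\sim\sqrt{N/\pi}$ and $b_{N}\sim 1/\sqrt{\pi N}$ are minimized at $N\approx 1/\delta$ and produce Pedersen's envelope $(2/\sqrt{\pi})\sqrt{\delta}$, which beats the first family on a neighborhood of $\delta=0$.

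Setting
\[
\gamma_{0}(\delta)=\min\left\{\inf_{\frac{1}{4}\leq a\leq 1}\left(\tfrac{\delta}{2\sqrt{a}}+\tfrac{\sqrt{a}}{2}\right),\ \inf_{N\geq 1}\left(m_{N}\delta+b_{N}\right)\right\}
\]
yields the curve plotted in Figure~\ref{fig:FredysBounds}, and the theorem follows. The main obstacle is not any sharp inequality, since each constituent bound is an immediate instance of Lemma~\ref{lem:bound_f_on_positive}; it is the combinatorics of the envelope, i.e., determining which family dominates on each subinterval of $\delta\in[0,1]$ and verifying that only finitely many lines from each family are needed to draw the stated function to any desired accuracy.
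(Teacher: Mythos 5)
Your proposal matches the paper's proof exactly: the paper says only that it ``combines all the linear bounds'' from Pedersen's \cite[Lemma 6.2]{PedersenCoronaConstruction} (the Taylor-polynomial family for $1-\sqrt{1-x}$ at $0$) with Lemma~\ref{lem:large_delta_root} (the family of tangent lines to $\sqrt{x}$ at $a\in[\tfrac14,1]$ from~\eqref{eq:bounds_from_linear_approx}), and your $\gamma_0$ is precisely the lower envelope of those two families. You have simply supplied the details the paper leaves implicit, including the correct asymptotics $m_N\sim\sqrt{N/\pi}$, $b_N\sim 1/\sqrt{\pi N}$ recovering the $\frac{2}{\sqrt{\pi}}\sqrt{\delta}$ envelope.
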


\begin{proof}
We simply combine all the linear bounds in \cite[Lemma 6.2]{PedersenCoronaConstruction}
with Lemma~\ref{lem:large_delta_root}.
\end{proof}

\begin{figure}
\includegraphics[clip,scale=0.6]{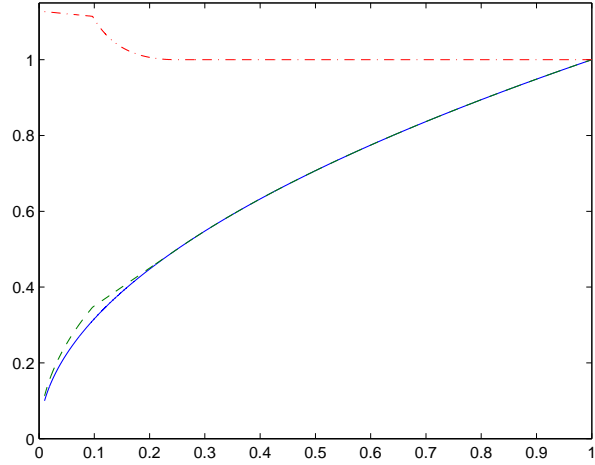} 
\caption{
Bound on $\left\Vert \left[H^{\frac{1}{2}},A\right]\right\Vert $
for varying values of $\delta=\left\Vert \left[H,A\right]\right\Vert $
as improved by the inequalities (\ref{eq:bounds_from_linear_approx}).
The solid curve is $\sqrt{\delta}$.  The dashed curve is the upper
bound $\gamma_0(\delta)$ of Thereom~\ref{thm:newRootBound}.  The top curve
is $\gamma_0(\delta)/\sqrt{\delta}$. 
\label{fig:FredysBounds} 
}
\end{figure}

\section{Examples involving functions on the circle}

There is a desire, driven by investigations in physics
\cite{HastLorTheoryPractice},
to get quantitative results regarding almost commuting matrices. The
Bott index for almost commuting matrices depends on the functional
calculus of unitary matrices. Quantitative studies of the Bott index
require triples of functions
\[
f,g,h:\mathbb{T}^{2}\rightarrow\mathbb{R}^{3}
\]
with certain topological properies. Having a method for dealing with
$\left\Vert \left[f[V],U\right]\right\Vert $for a pair of unitary
elements was the primary motivation for the present paper. 

\begin{cor}
\label{cor:CoeffSumsEstimate}
If $f$ has uniformly converent Fourier
series, 
\[
f(x)=\sum_{n=-\infty}^{\infty}a_{n}e^{inx}
\]
then 
\[
\eta_{f}(\delta)\leq2\sum_{n=-\infty}^{\infty}|a_{n}|.
\]
and
\begin{equation}
\eta_{f}(\delta)
\leq
\delta\sum_{n=-N}^{N}|na_{n}|
+
2\sum_{n=N+1}^{\infty}\left( |a_{n}|+|a_{-n}| \right).
\label{eq:middle_estimate}
\end{equation}
\end{cor}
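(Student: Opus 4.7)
The plan is to deduce both inequalities by applying Lemma~\ref{lem:bound_f_on_circle} with carefully chosen splittings $f = g + h$ of the Fourier series, combined with the crude bound $\|[B,A]\| \leq 2\|B\|\|A\|$ whenever convenient.

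For the first inequality, I would take the trivial splitting $g = 0$, $h = f$. Then the hypothesis that $g'$ has absolutely convergent Fourier series is satisfied vacuously with $m = \|g'\|_F = 0$, and Lemma~\ref{lem:bound_f_on_circle} gives $\eta_f(\delta) \leq 2\min_\lambda \|f - \lambda\|_\infty$. Bounding this above by $2\|f\|_\infty$ (take $\lambda = 0$) and then by $2\sum_n |a_n|$ via the triangle inequality applied to the Fourier series yields the claim. Alternatively, this can be viewed as the $N = -1$ degenerate case of (\ref{eq:middle_estimate}), so the first assertion becomes a consequence of the second.

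For the main inequality (\ref{eq:middle_estimate}), I would split $f = g_N + h_N$ where $g_N$ is the symmetric partial sum
\[
g_N(x) = \sum_{n=-N}^{N} a_n e^{inx}
\]
and $h_N = f - g_N$ is the tail. Since $g_N$ is a trigonometric polynomial, its derivative has only finitely many Fourier coefficients, and
\[
\|g_N'\|_F = \sum_{n=-N}^{N} |in a_n| = \sum_{n=-N}^{N} |na_n|,
\]
so the hypothesis of Lemma~\ref{lem:bound_f_on_circle} is clearly met. For the tail, bound $b$ by using $\lambda = 0$:
\[
b \leq 2\|h_N\|_\infty \leq 2\sum_{|n|>N} |a_n| = 2\sum_{n=N+1}^{\infty}\bigl(|a_n| + |a_{-n}|\bigr),
\]
again by the triangle inequality. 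Substituting into $\eta_f(\delta) \leq m\delta + b$ gives exactly (\ref{eq:middle_estimate}).

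There is essentially no obstacle here; the work has already been done in Lemma~\ref{lem:bound_f_on_circle}, and the only decision is the choice of splitting. The mild point worth noting is that one does \emph{not} need $f$ itself to have absolutely convergent Fourier series — uniform convergence together with absolute convergence of the finite head $g_N$ is automatic, and the tail is controlled purely in $\|\cdot\|_\infty$ via the absolute-value estimate. The two inequalities should then be presented as one short proof with the $g = 0$ splitting handling the first and the truncation splitting handling the second.
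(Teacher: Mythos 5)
Your proof is correct and is essentially the paper's own argument: both use Lemma~\ref{lem:bound_f_on_circle} with $g$ taken to be the symmetric partial sum $\sum_{n=-N}^{N}a_{n}e^{inx}$ for inequality (\ref{eq:middle_estimate}) and $g=0$ for the first inequality. You have merely supplied the routine intermediate estimates ($\|g'\|_F$, $b\leq 2\|h\|_\infty\leq 2\sum_{|n|>N}|a_n|$) that the paper leaves implicit.
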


\begin{proof}
If we take set 
\[
g(x)=\sum_{n=-N}^{N}a_{n}e^{inx}
\]
and apply Lemma~\ref{lem:bound_f_on_circle} we obtain (\ref{eq:middle_estimate}).
For the other we set $g$ to $0$.
\end{proof}

\begin{example}
Consider the triangle wave
\[
f(x)=\begin{cases}
1+\frac{2}{\pi}x & -\pi\leq x\leq0\\
1-\frac{2}{\pi}x & 0\leq x\leq\pi
\end{cases}
\]
we have $a_{2n}=0$ and
\[
a_{2n-1}=\frac{8}{\pi^{2}}\frac{1}{(2n-1)^{2}}.
\]
Using Corollary~\ref{cor:CoeffSumsEstimate} we get the bound on
$\eta_{f}$ as indicated in Figure~\ref{fig:boundingTriangleWave}.
Slighlty better estimates are possible if we
exactly compute the min and max of the difference between $f$ and
its triginometric polynomial approximations. We could also eliminate
the corners by interpolating with trig polynomials between the truncated
Fourier series.
\end{example}

\begin{figure}
\includegraphics[clip,scale=0.6]{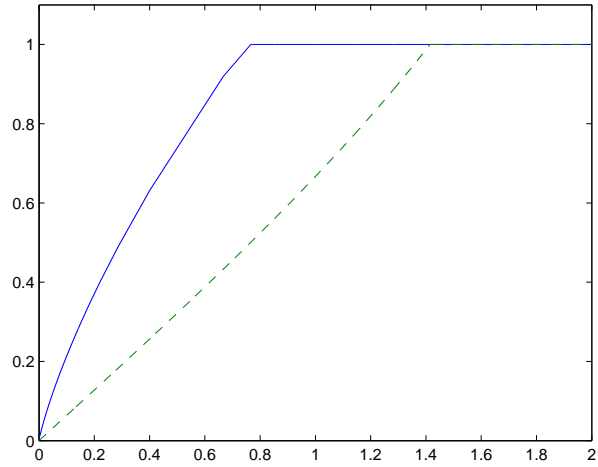}
\caption{
Bounds on $\left\Vert \left[f[V],A\right]\right\Vert $ for varying
values of $\delta=\left\Vert \left[V,A\right]\right\Vert $ for $f$
a triangle wave. The solid line is an upper bound and the dashed line
is a lower bound.
\label{fig:boundingTriangleWave} 
}
\end{figure}

The triangle wave is, up to scaling, the function used in \cite{ExelLoringInvariats}
as one of the functions defining the Bott invariant. To see how well
we are doing in bounding $\left\Vert \left[f[V],A\right]\right\Vert $
we consider a crude lower bound.

\begin{lem}
\label{lem:crude_lower_bound}
If $f$ is periodic and continuous then
for any $\delta<2$ we have 
\[
\eta_{f}(\delta)
\geq
\max
\left\{
		\left|f(x_{2})-f(x_{1})\right| \,
 	\left|\strut\,
   		\left|x_{2}-x_{1}\right|\leq2\arcsin\left(\tfrac{\delta}{2}\right)
   	\right.
\right\} .
\]

\end{lem}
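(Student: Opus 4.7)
The plan is to produce, for each pair of reals $x_{1}, x_{2}$ satisfying the hypothesis $|x_{2}-x_{1}|\leq 2\arcsin(\delta/2)$, an explicit $2\times 2$ unitary $V$ and contraction $A$ with $\|[V,A]\|\leq\delta$ and $\|[f[V],A]\| = |f(x_{2})-f(x_{1})|$. Taking the maximum over admissible pairs then gives the claimed lower bound on $\eta_{f}(\delta)$.

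Concretely, I would set $V = \mathrm{diag}(e^{ix_{1}}, e^{ix_{2}})$ in $M_{2}(\mathbb{C})$ and let $A$ be the $2\times 2$ swap matrix (zeros on the diagonal, ones on the antidiagonal), so that $V$ is unitary and $\|A\|=1$. A direct calculation shows that the commutator $[V,A]$ is antidiagonal with nonzero entries $\pm(e^{ix_{1}}-e^{ix_{2}})$, hence its operator norm equals $|e^{ix_{1}}-e^{ix_{2}}|=2|\sin((x_{2}-x_{1})/2)|$. The assumption $\delta<2$ gives $\arcsin(\delta/2)<\pi/2$, so $(x_{2}-x_{1})/2$ lies in a range where $\sin$ is monotonic, and the hypothesis on $|x_{2}-x_{1}|$ translates into $\|[V,A]\|\leq\delta$. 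Since $V$ is diagonal with spectrum $\{e^{ix_{1}},e^{ix_{2}}\}$, functional calculus gives $f[V] = \mathrm{diag}(f(x_{1}),f(x_{2}))$, and the same antidiagonal computation with $f(x_{j})$ in place of $e^{ix_{j}}$ yields $\|[f[V],A]\|=|f(x_{1})-f(x_{2})|$.

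There is essentially no obstacle to this proof --- it is a short direct construction in $M_{2}(\mathbb{C})$. The only mild subtlety is the role of the hypothesis $\delta<2$, which is exactly what makes the conversion from the arc-length bound $|x_{2}-x_{1}|\leq 2\arcsin(\delta/2)$ to the chord-length bound $\|[V,A]\|\leq\delta$ monotone and useable. By continuity and periodicity of $f$, the supremum over admissible pairs is actually attained (the difference $|f(x_{2})-f(x_{1})|$ depends only on $x_{1},x_{2}$ modulo $2\pi$ and lives on a compact set), justifying writing $\max$ rather than $\sup$ in the statement.
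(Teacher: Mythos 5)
Your construction --- $V=\mathrm{diag}(e^{ix_1},e^{ix_2})$ and $A$ the $2\times2$ swap matrix --- is exactly the one the paper uses (the paper's displayed $V$ contains a typo repeating $e^{ix_2}$, but the subsequent line $f[V]=\mathrm{diag}(f(x_1),f(x_2))$ confirms the intended $V$). Your proof is correct and follows the same route, merely spelling out the chord-versus-arc computation and the attainment of the max that the paper leaves as ``an easy calculation.''
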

The follows easily from examining the commutator of
\[
\left(\begin{array}{cc}
0 & 1\\
1 & 0
\end{array}\right)
\]
 with
\[
\left(\begin{array}{cc}
e^{ix_{2}} & 0\\
0 & e^{ix_{2}}
\end{array}\right)
\]
 and
\[
f\left[\left(\begin{array}{cc}
e^{ix_{2}} & 0\\
0 & e^{ix_{2}}
\end{array}\right)\right]=\left(\begin{array}{cc}
f(x_{1}) & 0\\
0 & f(x_{2})
\end{array}\right).
\]
 Thus in the example of the triangle wave, we could may have considerable
room to improve our estimate. However, for the purposes of ``quantitative
$K$-theory'' involving the Bott index, this $f$ shows limited potential,
as its companion functions $g$ and $h$ are not so nice. That is,
in the Bott index definition as in \cite{ExelLoringInvariats} we also need
\begin{equation}
\label{eqn:define_h}
h(x)=\begin{cases}
\sqrt{1-\frac{4}{\pi^{2}}x^{2}} & \mbox{ if }x\in\left[-\frac{\pi}{2},\frac{\pi}{2}\right]\\
0 & \mbox{ if }x\notin\left[-\frac{\pi}{2},\frac{\pi}{2}\right]
\end{cases}
\end{equation}
and $\eta_{h}$ tends to zero rather slowly. The crude lower bound
from Lemma~\ref{lem:crude_lower_bound} is shown in
Figure~\ref{fig:boundingBump}.  This is one reason for the switch to
a different triple of functions $f$, $g$ and $h$ in
\cite{HastLorTheoryPractice}.  Commutators involving
the functions in the new and improved Bott index will be
analyized elsewhere.

\begin{figure}
\includegraphics[clip,scale=0.6]{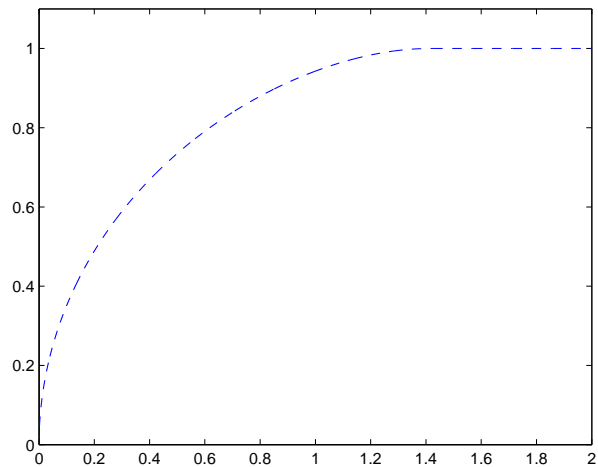}
\caption{A lower bound on
$\left\Vert \left[h[V],A\right]\right\Vert $ where $h$ is the 
bump function on the circle defined in Equation~\ref{eqn:define_h}.
\label{fig:boundingBump} }
\end{figure}

\section{Acknowledgements}

This work was partially supported by a grant from the Simons Foundation
(208723 to Loring).

\rule[0.5ex]{1\columnwidth}{1pt}


\begin{thebibliography}{1}

\bibitem{BhatiaKittanehInequalitiesNormsCommutators}
{\sc R.~Bhatia and F.~Kittaneh}, {\em Some inequalities for norms of
  commutators}, SIAM J. Matrix Anal. Appl., 18 (1997), pp.~258--263.


\bibitem{ExelLoringInvariats}
{\sc R.~Exel and T.~A. Loring}, {\em Invariants of almost commuting unitaries},
  J. Funct. Anal., 95 (1991), pp.~364--376.


\bibitem{HastLorTheoryPractice}
{\sc M.~B. Hastings and T.~A. Loring}, {\em Topological insulators and {$C\sp
  *$}-algebras: Theory and numerical practice}, Ann. Physics, 326 (2011),
  pp.~1699--1759.

\bibitem{PedersenCoronaConstruction}
{\sc G.~K. Pedersen}, {\em The corona construction}, in Operator {T}heory:
  {P}roceedings of the 1988 {GPOTS}-{W}abash {C}onference ({I}ndianapolis,
  {IN}, 1988), vol.~225 of Pitman Res. Notes Math. Ser., Longman Sci. Tech.,
  Harlow, 1990, pp.~49--92.

\bibitem{PedersenCommutatorInequality}
\leavevmode\vrule height 2pt depth -1.6pt width 23pt, {\em A commutator
  inequality}, in Operator algebras, mathematical physics, and low-dimensional
  topology ({I}stanbul, 1991), vol.~5 of Res. Notes Math., A K Peters,
  Wellesley, MA, 1993, pp.~233--235.

\end{thebibliography}
\end{document}